\theoremstyle{plain}
\begin{document}

\newtheorem{thm}{Theorem}[section]

\newtheorem{lem}{Lemma}[section]

\newtheorem{prp} {Proposition}[section]

\newtheorem{dfn}{Definition}[section]

\newtheorem{ex}{Example}[dfn]

\newtheorem{rmk}{Remark}[thm]

\newtheorem{rem}{Remark}[dfn]

\newtheorem{re}{Remark}[section]

\newtheorem{remk}{Remark}[subsection]

\newtheorem{dfnn}{Definition}[subsection]

\newtheorem{exe}{Example}[rmk]

\newtheorem{exempl}{Example}[section]

\newtheorem{pr}{Problem}[section]

\newcommand{\zero}{0}
\newcommand{\pl}{+}
\newcommand{\eg}{=}

\newcommand\F{\mathbb F}
\def\A{\mathbb A}
\def\N{\mathbb N}
\def\K{\mathbb K}
\def\E{\mathcal E}

\title{A new approach in constructing isogenies of elliptic curves in characteristic three}

\author{Marius B\u aloi}
\affil{Faculty of Mathematics and Informatics, University of Bucharest, Academiei st. 14, Bucharest, Romania }
\affil{\textbf{grigore-marius.baloi@s.unibuc.ro}}
\date{}
\maketitle


\begin{abstract} Given an elliptic curve ${\mathcal E}$ over a field $K$ it is a challenging problem to write down explicit elements of its endomorphism ring ${\rm End}({\mathcal E});$ the problem amounts to find all possible solutions to a functional equation in the field of rational functions $K(X).$ Instead of attempting to describe them directly, we look first for solutions in the larger field of Laurent power series $K((X))$, which we call them {\em formal endomorphisms}. We show that the set of separable formal endomorphisms naturally identifies with a subset of $\frac{1}{X}K[[X]]-$rational points of a plane cubic defined over $K((X)).$ As a by-product, we present a method for finding all formal separable endomorphisms in characteristic $3$. 
\end{abstract}
\section{Introduction}





Let $K$ be a field of characteristic three and let ${\mathcal E}$ be an elliptic curve defined over $K$. Its Weierstrass normal form (WNF, for short) can be (see, e.g. \cite{Silv}) either
\begin{equation}\label{wnf1}
({\rm WNF1})\;\;\;\;Y^2=X^3+AX+B
\end{equation}
 or  
\begin{equation}\label{wnf2}
({\rm WNF2})\;\;\;\;Y^2=X^3+AX^2+B
\end{equation}

\noindent Notice that a curve in WNF2 is automatically non-supersingular (cf e.g. \cite{Silv}, Thm. 4.1, pp 148), so the real case of interest is the WNF1; throughout this paper, we will focus on this case only.
\noindent Let 
$
\varphi : \mathcal{E}\to \mathcal{E}
$
 be a separable isogeny. Then, in affine coordinates, $\varphi$ is of the form
\begin{equation}\label{cohel} 
 \varphi(x,y)=\left( \eta(x), c y \eta'(x)\right)
 \end{equation}
  where $ \eta \in K(X)$ is a rational function and $c\in K^*$.
Indeed, any isogeny must be of the form
$$(x, y)\mapsto (f_1(x,  y), f_2(x, y))$$ with $f_1, f_2$ rational functions.
Since 
the neutral element is the point at infinity, $\Omega$, of homogeneous coordinates $[x, y, z]=[0,1,0]$, we see that the inverse of a point $P(x, y)$ is $(x, -y)$ and, as isogenies are group morphisms, we see that $f_1$ is fact a rational function on $x,$ say $\eta(x).$ As in both cases, the invariant differential is $\omega=\frac{dx}{y}$, we get that
$c\varphi^*(\omega)=\omega$, for some $c\not=0$. From \cite{Wa}, we get that
$$c\frac{\eta'(x)dx}{f_2(x, y)}=\frac{dx}{y}$$
hence 
$f_2=c\eta'y.$

\hfill

\noindent The aim of the paper is to find an algorithmic way of generating  all isogenies of ${\mathcal E}$.
Generating all isogenies of ${\mathcal E}$ amounts henceforth to find all rational functions $\eta\in K(X)$ that satisfy
\begin{equation}\label{isof}
c^2y^2(\eta')^2=\eta^3+A\eta+B.
\end{equation}

\noindent Instead of trying to find all solutions of the above equation in $\eta$, as in \cite{Bostan} for characteristic 2, we enlarge the frame of the field of rational functions to the field   $K((X))$, the fraction field  of the ring $K[[X]]$ of formal power series.
Notice that if $\eta$ is a separable isogeny, it is unramified everywhere, hence in particualar it must have at most  poles of order at most one. Inspired by this, we will call a {\em formal isogeny} any solution 
$\eta$ of \ref{isof} which belongs to $\frac{1}{X}K[[X]].$


\hfill


 







\hfill


\subsection{A splitting of $K((X)).$}
Let $K$ be a field of characteristic three. For any Laurent power series
$
S=\sum_{n\geq k} a_nX^n
$
we will use the decomposition
\begin{equation}\label{split1}
S=\alpha+\beta +\gamma
\end{equation} 
where 
$$\alpha=\sum_{3n+1\geq k} \alpha_nX^{3n+1}, \; \beta=\sum_{3n+2\geq k} \beta_nX^{3n+2}, \;\gamma=\sum_{3n\geq k} \gamma_nX^{3n}.$$
Notice that the above decomposition is given by the splitting 
\begin{equation}\label{split}
K((X))=V_{1}\oplus V_{-1}\oplus V_0
\end{equation}
induced by the formal derivative (denoted by  $'$), where
$$V_0\coloneqq \left\{S\in K((X))\vert S'=0\right\}.$$
$$V_1\coloneqq \left\{S\in K((X))\vert S'=\frac{S}{X}\right\}$$
$$V_{-1}\coloneqq\left\{S\in K((X))\vert S'=-\frac{S}{X}\right\}$$
Sometimes, by an abuse of language, we will called the elements of the above subspaces as "homogeneous" of degrees $0, 1$ and $2$ (or $-1$) respectively.


\noindent Notice also that  the formal power series  $S$ is actually rational, $S\in K(X),$ if and only if $\alpha, \beta$ and $\gamma$ are all rational. Indeed, if $S=\alpha+\beta+\gamma$ one can immediately check by direct computation that one has:
\begin{equation}\label{split2}
\left\{
\begin{array}{lcc}
\alpha=XS'-X^2S''\\
\beta=-X^2S''\\
\gamma= S-XS'-X^2S''.
\end{array}
\right.
\end{equation}

\subsection{\color{black}{A variant of Hensel's lemma}}

\begin{lem}\label{detgamma}Let $A\in K^*$ and $\psi\in V_0\cap K[[X]]$ be arbitrary,  $\psi=\displaystyle\sum_{n\geq 0} C_nx^{3n}.$ Then there exists (and it is unique up to a constant additive factor in $K$) some $\gamma\in V_0\cap K[[X]]$ such that
\begin{equation}\label{gamma3}
\gamma^3+A\gamma=\psi
\end{equation}
if and only if the equation 
\begin{equation}\label{c0} X^3+AX=\psi(0)
\end{equation} has a solution in $K.$



\end{lem}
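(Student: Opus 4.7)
The plan is to exploit the characteristic-three structure of $V_0$: since $S' = 0$ forces the coefficient $n a_n$ of $X^{n-1}$ to vanish for every $n$, the elements of $V_0 \cap K[[X]]$ are precisely the power series in $X^3$. So I would write $\gamma = \sum_{n \ge 0} \gamma_n X^{3n}$ as the unknown and use Frobenius, $\gamma^3 = \sum_{n \ge 0} \gamma_n^3 X^{9n}$, to turn the functional equation $\gamma^3 + A\gamma = \psi$ into an infinite system of scalar equations indexed by the coefficient of $X^{3n}$.

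Matching coefficients then splits into three cases. For $n = 0$ one gets exactly
\[
\gamma_0^3 + A\gamma_0 = C_0 = \psi(0),
\]
which is solvable for $\gamma_0 \in K$ if and only if equation \ref{c0} admits a root in $K$. For $n \ge 1$ with $3 \nmid n$, the $\gamma^3$ contribution is absent and one gets $A\gamma_n = C_n$, so $\gamma_n = C_n / A$ (using $A \in K^*$). For $n = 3m$ with $m \ge 1$, one obtains $A\gamma_n + \gamma_m^3 = C_n$, whence $\gamma_n = (C_n - \gamma_m^3)/A$ is determined by $C_n$ and a previously-computed coefficient. Thus, after fixing any admissible $\gamma_0$, all further $\gamma_n$ are forced, giving both the existence (once \ref{c0} is solvable) and the fact that $\gamma$ is determined by its constant term.

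For the uniqueness up to an additive constant, I would take two solutions $\gamma, \widetilde{\gamma} \in V_0 \cap K[[X]]$ and set $\delta = \widetilde{\gamma} - \gamma$. Using $(\gamma + \delta)^3 = \gamma^3 + \delta^3$ in characteristic three, subtracting the two equations gives $\delta^3 + A\delta = 0$. Repeating the coefficient-matching argument on $\delta = \sum d_n X^{3n}$ shows that for every $n \ge 1$ the relation reads either $A d_n = 0$ (when $3 \nmid n$) or $A d_n + d_{n/3}^3 = 0$ (when $3 \mid n$); an easy induction yields $d_n = 0$ for all $n \ge 1$, so $\delta = d_0 \in K$, as required.

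There isn't really a serious obstacle here: the whole point is that the Frobenius sparseness in characteristic three decouples the system triangularly, so both existence and uniqueness reduce to the single scalar obstruction \ref{c0}. The only place to be careful is to correctly identify $V_0 \cap K[[X]]$ with $K[[X^3]]$ and to handle the index-shift $3n \leftrightarrow 9n$ cleanly when matching coefficients.
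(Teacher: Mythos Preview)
Your proposal is correct and follows essentially the same coefficient-matching argument as the paper: write $\gamma=\sum_{n\ge 0}\gamma_n X^{3n}$, use Frobenius to expand $\gamma^3$, and read off the triangular recursion for the $\gamma_n$'s with the single scalar obstruction at $n=0$. If anything, your version is slightly more careful (you keep track of the factor $A$ in the recurrences, and you spell out the uniqueness-up-to-constant argument via $\delta^3+A\delta=0$, which the paper leaves implicit).
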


	\begin{proof} 
	a) Letting $\gamma=\displaystyle{\sum_{n\geq 0}\gamma_nX^{3n}}$ we  have	that $$\sum_n \gamma_n^3 X^{9n}+\sum_n \gamma_nX^{3n}=\sum_n C_n X^{3n}.$$ The initial coefficient $\gamma_0$ is determined as a solution of $X^3+AX=\psi(0)$, and for all $n>0$ we have the following recurrence relations:
	
	\begin{equation}\label{recgammacoeff}
	\begin{array}{ccc}
	\gamma_{n}=C_n, \; \forall n\not=3\ell \\
	\gamma_{\ell}^3+\gamma_{3\ell}= C_{3\ell},\; \forall \ell
	\end{array}
	\end{equation}
	Hence the coefficients of $\gamma$ can be determined by the simple linear recurrence from above.

\noindent {\bf Remark 1.} It is straightforward that the solution $\gamma$ of \ref{gamma3} is usually just a formal power series, even if the function $\psi$ is rational. An easy example is  given by the case when $\gamma^3+\gamma=X^3$, for which 
$\gamma(X)=\displaystyle\sum_{i=1}(-1)^{i-1}X^{3^i}$  which is not a rational function as the series is not periodic.

\noindent {\bf Remark 2.} Notice that if one weakens the condition that $\psi\in K[[X]]$, allowing principal parts for it, it is possible that the equation \ref{gamma3} has no solution $\gamma$ even in $K((X))$; an immediate example is
$\gamma^3+\gamma=\frac{1}{x^3}.$
	
	\end{proof}



\section{ The main result}

\begin{thm} Let $K$ be a field of charateristic $3$, and $\E$ be an elliptic curve in WNF1 as in (\ref{wnf1}).  Let $\K$ denote the field $K((X))$ (with decomposition $K((X))=V_0\oplus V_1\oplus V_2$ as in \ref{split}) and let $\A^3_{\K}=\{(\alpha, \beta, \gamma)\vert \alpha, \beta, \gamma \in \K\}$ be the affine $3-$space over $\K.$ 
Let $c\in K, c\not=0$ be arbitrary. Consider the plane cubic ${\rm E}_c$ over $K((X))$ defined by 
\begin{equation}\label{space}
\left\{
\begin{array}{lcc}
c^2Ax\alpha+c^2(x^3+B)\beta=Ax^2\\
c^2(x^3+Ax+B)\left(\frac{\alpha-\beta}{x}\right)^2=(\alpha+\beta+\gamma)^3+A(\alpha+\beta+\gamma)+B
\end{array}
\right.
\end{equation}

a) If $B\not=0$, the set of all separable formal endomorphisms of $\E$ with `` derivative at origin'' equal to $c$ identifies with the set of $K[[X]]-$rational points of ${\rm E}$
such that $\alpha\in V_1,$$ \beta\in V_2$ and $\gamma\in V_0$ and, satisfying the ``compatibility condition'': there exists $k\in K$ such that $k^3+k=c^2B\alpha_1-B$ (where  $\alpha=\alpha_1X+\alpha_4x^4+\alpha_7X^7+\dots).$

b) If $B=0$, then the set of all separable formal endomorphisms of $\E$ with ``derivative at origin'' equal to $c$ identifies with the set of points $(\alpha, \beta, \gamma)$ of ${\rm E}_c$ with $\beta\in \frac{1}{X}V_2$, $\alpha \in V_1, \gamma \in V_0$.
In this case, the compatibility conditions are given by the relations \ref{c1}, \ref{c2}, \ref{c3} and \ref{c4}  below.
 \end{thm}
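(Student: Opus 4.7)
The plan is to exploit the $\mathbb{Z}/3\mathbb{Z}$--grading $K((X))=V_1\oplus V_{-1}\oplus V_0$ of (\ref{split}) to split the functional equation $c^2(X^3+AX+B)(\eta')^2=\eta^3+A\eta+B$ into homogeneous pieces and read off (\ref{space}) as the resulting system. Two preliminary facts make this work. Since $\alpha'=\alpha/X$, $\beta'=-\beta/X$ and $\gamma'=0$, one has $\eta'=(\alpha-\beta)/X$, so after substitution the functional equation is verbatim the second equation of (\ref{space}). In characteristic three, moreover, the Frobenius sends each $V_i$ into $V_0$ (so $\eta^3=\alpha^3+\beta^3+\gamma^3\in V_0$), one has $V_i\cdot V_j\subseteq V_{i+j}$, and $(\alpha-\beta)^2=\alpha^2+\alpha\beta+\beta^2$ since $-2=1$.

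Step one is to compute the $V_1$, $V_{-1}$, $V_0$ components of both sides. The right-hand side yields $A\alpha\in V_1$, $A\beta\in V_{-1}$ and $\alpha^3+\beta^3+\gamma^3+A\gamma+B\in V_0$. For the left-hand side, expanding $c^2(X^3+AX+B)(\alpha^2+\alpha\beta+\beta^2)/X^2$ and sorting by degree modulo three, the $V_1$ and $V_{-1}$ equations turn out to be multiples by $\alpha$ and $\beta$ respectively of the \emph{same} underlying identity $c^2[(X^3+B)\beta+AX\alpha]=AX^2$; this is precisely the first equation of (\ref{space}). Once this identity is in force, the $V_0$ piece combined with it is equivalent to the whole second equation of (\ref{space}). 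Thus, assuming the non-degeneracy $\alpha,\beta\not\equiv 0$ (forced by $c\neq 0$), the original functional equation is equivalent to the system (\ref{space}); conversely, any $(\alpha,\beta,\gamma)$ satisfying (\ref{space}) with the stated homogeneity recombines into $\eta=\alpha+\beta+\gamma$ fulfilling the functional equation.

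Step two is to translate the pole-at-origin condition and the compatibility. In case (a) with $B\neq 0$, the factor $X^3+B$ is a unit in $K[[X]]$, so the first equation of (\ref{space}) gives $\beta=AX(X-c^2\alpha)/\bigl(c^2(X^3+B)\bigr)$ explicitly in terms of $\alpha$; the condition $\alpha\in V_1\cap K[[X]]$ then forces $\beta\in X^2K[[X]]\cap V_{-1}$, so $\eta$ has no pole. The $V_0$ component has the shape $\gamma^3+A\gamma=\psi(\alpha,\beta)$, to which Lemma \ref{detgamma} applies: a $\gamma\in V_0\cap K[[X]]$ exists (up to the usual additive constant) precisely when the scalar cubic $k^3+Ak=\psi(0)$ has a root in $K$. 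Evaluating the $X^2$--coefficient of the cleared functional equation gives $\psi(0)$ as an explicit polynomial in $c$, $B$ and $\alpha_1$, which (after the indicated change of variables) is the theorem's compatibility. In case (b), $B=0$, so $X^3+B=X^3$ is no longer a unit; solving the first equation of (\ref{space}) for $\beta$ then permits a simple pole at $0$, i.e.\ $\beta\in\tfrac{1}{X}V_{-1}$, and re-reading the lowest-order coefficients of the three homogeneous equations produces the four extra scalar identities (\ref{c1})--(\ref{c4}).

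The main obstacle I anticipate is the case (b) bookkeeping: allowing a pole in $\beta$ introduces several new low-order terms into every homogeneous equation, and packaging the resulting constraints into the minimal list (\ref{c1})--(\ref{c4}) requires care to separate genuine obstructions from identities automatically imposed by the recursion for the higher coefficients of $\alpha$, $\beta$ and $\gamma$. Everything else is routine: the forward implication is the graded calculation sketched above, Lemma \ref{detgamma} handles the recursive construction of $\gamma$, and the converse direction is just the reversal of the same sorting by homogeneous components.
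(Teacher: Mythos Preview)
Your approach is correct and reaches the same destination as the paper, but the route you take to obtain the \emph{first} equation of (\ref{space}) is genuinely different. The paper derives that linear relation by \emph{differentiating} the functional equation $c^2(X^3+AX+B)(\eta')^2=\eta^3+A\eta+B$ once more, using that in characteristic three the derivative kills the cubic terms, and then dividing through by $\eta'$ (this is where separability is used). In your argument you instead split the functional equation directly into its $V_1$, $V_{-1}$, $V_0$ components and observe that the $V_1$ and $V_{-1}$ pieces are exactly $\alpha$-- and $\beta$--multiples of the linear relation $c^2[(X^3+B)\beta+AX\alpha]=AX^2$. Your decomposition is arguably more transparent for the converse direction (reassembling the graded pieces \emph{is} the functional equation), while the paper's differentiation trick is shorter and does not require tracking nine cross-terms in the product $(X^3+AX+B)(\alpha^2+\alpha\beta+\beta^2)/X^2$. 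For Step~2 (integrality of $\beta$, the role of Lemma~\ref{detgamma}, and the pole analysis when $B=0$) your outline coincides with the paper's.

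One small correction: it is not true that $c\neq 0$ forces both $\alpha\not\equiv 0$ and $\beta\not\equiv 0$; the paper's own worked examples have $\beta=0$ (Examples~1--2) or $\alpha=0$ (Example~3). What separability does force is $\eta'=(\alpha-\beta)/X\neq 0$, hence \emph{at least one} of $\alpha,\beta$ is nonzero, and that is all you need: divide the corresponding $V_{\pm 1}$ equation by the nonvanishing factor to extract the linear relation, and the remaining $V_{\mp 1}$ equation is then automatically satisfied (it is that same relation multiplied back by $0$, or by the nonzero factor). With this adjustment your argument goes through.
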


\subsection{Proof of the first equation of (\ref{space})}
\begin{lem}
Let $K $ be a field of characteristic $ 3 $ and let $ \mathcal{E} $ be an elliptic curve over $K $ given by $\mathcal{E}:y^2=x^3+Ax+B.$ 
 Let $\eta$ be a formal endomorphism of ${\mathcal E}$ defined over $K$ in the form (\ref{cohel}). Write $\eta$ under the form $\eta=\alpha+\beta+\gamma$ as in \ref{split1}. Then 
 $$c^2Ax\alpha+c^2(x^3+B)\beta=Ax^2$$ holds; in particular, we see that the $\alpha-$part determines the $\beta$-part and conversely.

\end{lem}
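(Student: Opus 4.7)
The approach is to decompose both sides of the isogeny functional equation
$$c^2(x^3+Ax+B)(\eta')^2 = \eta^3 + A\eta + B$$
(obtained from (\ref{isof}) by substituting $y^2 = x^3+Ax+B$) along the $\mathbb{Z}/3$-grading $K((X)) = V_0 \oplus V_1 \oplus V_{-1}$ of (\ref{split}), and then to match the $V_1$-components on each side.

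From the very definitions of the $V_i$, one has $\alpha' = \alpha/x$, $\beta' = -\beta/x$, and $\gamma' = 0$, so $\eta' = (\alpha - \beta)/x$, and in characteristic three $(\eta')^2 = (\alpha^2 + \alpha\beta + \beta^2)/x^2$. Multiplicativity of the grading (mod $3$) places $\alpha^2/x^2$, $\alpha\beta/x^2$, $\beta^2/x^2$ in $V_0$, $V_1$, $V_{-1}$ respectively. On the right-hand side, the Frobenius in characteristic three gives $\eta^3 = \alpha^3 + \beta^3 + \gamma^3 \in V_0$, while $A\eta + B$ decomposes as $A\alpha \in V_1$, $A\beta \in V_{-1}$, and $(A\gamma + B) \in V_0$. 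Multiplying $(\eta')^2$ by $x^3 + Ax + B$ (with $x^3, B \in V_0$ and $Ax \in V_1$) and isolating the $V_1$-component of each side yields
$$c^2 x \alpha\beta + c^2 A \frac{\alpha^2}{x} + c^2 B \frac{\alpha\beta}{x^2} = A\alpha.$$

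Multiplying through by $x^2$ and factoring $\alpha$ out on the left then rewrites the equation as
$$\alpha \cdot \bigl( c^2 A x \alpha + c^2 (x^3 + B)\beta - A x^2 \bigr) = 0.$$
Extracting the $V_{-1}$-component in the same way produces the symmetric annihilation relation with outer factor $\beta$ in place of $\alpha$. Since $\eta$ is separable, $\eta' = (\alpha-\beta)/x \neq 0$, so at least one of $\alpha$ or $\beta$ is nonzero in the field $K((X))$; cancelling it proves the claimed identity. The main obstacle is purely the grading bookkeeping needed to assemble the $V_1$-part of $c^2(x^3+Ax+B)(\eta')^2$, which involves exactly three of the nine cross-terms in the expansion --- one has to keep careful track of the fact that $Ax$ has grade $1$ and so shuffles the components of $(\eta')^2$, rather than preserving them as $x^3$ and $B$ do.
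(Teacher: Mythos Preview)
Your proof is correct, but it follows a different route than the paper. The paper does not project the functional equation onto the $V_1$- and $V_{-1}$-components; instead it \emph{differentiates} the identity
\[
c^2(x^3+Ax+B)(\eta')^2=\eta^3+A\eta+B
\]
once, obtaining $c^2A(\eta')^2+2c^2(x^3+Ax+B)\eta'\eta''=A\eta'$, and then divides through by the nonzero factor $\eta'$. Since $\eta''=\beta''=2\beta/x^2$ and $\eta'=(\alpha-\beta)/x$, a short simplification gives the desired linear relation directly, with no case distinction on whether $\alpha$ or $\beta$ is the nonzero factor. In effect, the paper's differentiation step kills the cubic term $\eta^3$ and leaves a first-order relation, whereas your approach kills it by observing that cubes land in $V_0$; both eliminate the awkward $\gamma$-dependence. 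Your method is perhaps more systematic (pure grading bookkeeping, no calculus trick), while the paper's derivative-and-divide manoeuvre avoids tracking the nine cross-terms and the separate $\alpha\cdot(\cdots)=0$, $\beta\cdot(\cdots)=0$ cases.
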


\begin{proof}
		
	\noindent We have that 
	\begin{equation} 
	\label{pi:0}
	 c^2y^2(\eta^{'}(x))^2=\eta^3(x)+A\eta(x)+B.  \end{equation}  
	
	\noindent Since $ y^2= x^3+Ax+B$, we have that 
	
	\begin{equation}
	\label{p:1}
	c^2(x^3+Ax+B)(\eta^{'}(x))^2=\eta^3(x)+A\eta(x)+B.
	\end{equation}

	\noindent Taking the derivative of (\ref{p:1}) and using the fact that we are in $char=3$ we have that 
	
	\begin{equation}
	\label{p:2}
	c^2A(\eta^{'}(x))^2+2c^2(x^3+Ax+B)\eta^{'}(x)\eta^{''}(x)=A\eta'(x).
	\end{equation}        
	
	\noindent Since $ \eta $ is separable (by assumption), we have that $ \eta'\neq0 $. Then, dividing (\ref{p:2}) by $ \eta' $, we get that  
	\begin{equation}
	\label{p:3}
		c^2A\eta^{'}(x)+2c^2(x^3+Ax+B)\eta^{''}(x)=A.
	\end{equation}

	\noindent Under the $(\alpha, \beta, \gamma)-$decomposition of $\eta$  we get that
	
	$$c^2A(\alpha'(x)+\beta'(x))+2c^2(x^3+Ax+B)\beta''(x)=A.$$
	and keeping into account that $\alpha\in V_1$ and $ \beta\in V_{-1}$, we further get
	
	$$c^2A\left(\frac{\alpha(x)-\beta(x)}{x}\right)-2c^2(x^3+Ax+B)\frac{\beta(x)}{x^2}=A.$$
	This relation becomes
\begin{equation}\label{wond}
	c^2Ax\alpha(x)+c^2(x^3+B)\beta(x)=Ax^2
\end{equation}
	and, as $c^2A\not=0$ and $c^2(x^3+B)\not =0$, we see that $\alpha$ determines $\beta$ and conversely.
	\end{proof}

\subsection{Proof of the second equation of (\ref{space}) }
\begin{proof}

To retrieve the $\gamma-$part from the $\alpha-$ and $\beta-$ parts, we go back to (\ref{p:1}) getting that
{\small
\begin{equation}
	\label{equ}
	c^2(X^3+AX+B)(\alpha^{'}+\beta')^2=(\alpha+\beta+\gamma)^3+A(\alpha+\beta+\gamma)+B.	
	\end{equation}    }
\noindent  As $\alpha'=\frac{\alpha}{X}$ and $\beta'=-\frac{\beta}{X}$ the above relation becomes:
{\small
\begin{equation}\label{equ1}
	c^2(X^3+Ax+B)\left(\frac{\alpha-\beta}{x}\right)^2=(\alpha(x)+\beta(x)+\gamma(x))^3+A(\alpha(x)+\beta(x)+\gamma(x))+B.	
	\end{equation}  
}    

Proof of a). Choose $\alpha \in V_1\cap K[[X]]$ arbitrary.
As  $B\not=0$, then from equation \ref{wond} we see that $\beta$ is easily determined and, moreover, is belongs to $V_2\cap K[[X]]$ (since $X^3+B$ is invertible in $K[[X]]).$ Now, to retrieve $\gamma$ we use Lemma \ref{detgamma}. In order to apply it, we must first check that 
\begin{equation}\label{psifac}
\psi =c^2(X^3+Ax+B)\left(\frac{\alpha-\beta}{x}\right)^2-\alpha^3-\beta^3-\textcolor{red}{A}\alpha-\textcolor{red}{A}\beta-B
\end{equation}
is in $k[[X]].$ But this is easily verified, since as $\alpha\in V_1\cap K[[X]]]$ and $\beta\in V_2\cap K[[X]]$ we get that $\left(\frac{\alpha-\beta}{x}\right)^2$ also belongs to $K[[X]].$
Next, we need to ceck that $\psi\in V_3\cap K[[X]].$  This follows by looking at the ``homogenous'' components of it and keeping in mind the relation \ref{wond}. Eventually, we need to look at the  ``initial condition'' that asks for the equation $X^3+AX=\psi(0)$ to have a solution in $K;$ this amounts to the existence of a $k\in K $ such that $k^3+k=c^2B\alpha_1 -B$, as stated.

\hfill

Proof of b). In this case, the equation \ref{wond} becomes
$c^2AX\alpha+c^2X^3\beta=AX^2$. So, taking some $\alpha\in V_1\cap K[[X]],$ say $\alpha=X\delta(X^3)$ (with $\delta \in K[[X]]$) we get
$\beta=\frac{AX^2-c^2AX^2\delta}{c^2X^3}=\frac{A-c^2A\delta}{c^2X}$ hence, $\beta=\frac{\beta_{-1}}{X}+S$
where 
\begin{equation}\label{c1}
\beta_{-1}=\frac{A}{c^2}-A\delta_0
\end{equation}
and $S\in X^2K[[X]].$
Now, to use Lemma \ref{detgamma} we must look again at the factor $\psi$ from \ref{psifac}; first, we look at its principal part, which must vanish.
Since, in our case, 
$$\psi =c^2(X^3+AX)\left(\frac{\alpha-\beta}{X}\right)^2-\alpha^3-\beta^3-A\alpha-A\beta,$$
we get that, modulo terms in $K[[X]]$, $\psi$ equals to
$$c^2(X^2+A)\frac{\beta_{-1}^2}{X^3}-\frac{b_{-1}^3}{X^3}-A\frac{b_{-1}}{X}.$$ Then, we obtain that 
\begin{equation}\label{c2}
c^2A\beta_{-1}^2-\beta_{-1}^3=0
\end{equation}
and 
\begin{equation}\label{c3}c^2(\beta_{-1}^2+A\alpha_1\beta_{-1})-A\beta_{-1}=0,
\end{equation}
hence $\beta_{-1}=c^2A$ and $\alpha_1=\frac{c^4-1}{2c^2}.$

Eventually, we look at the condition that $X^3+AX=\psi(0)$ to have a solution.  After direct computations, we get that this amounts to require that 
\begin{equation}\label{c4}
X^3+AX=\psi(0)=2c^2A\beta_{-1}a_2
\end{equation}
to have a solution in $K.$
\end{proof}

\subsection{An algorithm for finding formal endomorphisms}

To summarize the ideas in the previous Theorem, we present an algorithm for finding formal endomorphisms for a given elliptic curve ${\mathcal E}$ of equation $$Y^2=X^3+AX+B$$ (with given `` differential at origin'' $c\in K^\star$) over a field $K$ of characteristic three.

\begin{itemize}
\item Pick any formal power series $\alpha\in V_1$;

    \item Determine $\beta\in V_2$ from equation \ref{wond};

    \item Check the compatibility condition for the choice we made (according to the cases $B\not=0$ or $B=0$); if this is not satisfied, just change the initial coefficient of $\alpha$;

    \item Determine the formal power series $\gamma$ form equation \ref{equ1};

    \item Eventually, the desired formal endomorphism will be given by $(\eta, c\eta')$ where $\eta=\alpha+\beta+\gamma.$

\end{itemize}
 
\section{Worked examples}

\noindent {\bf Example 1.}  Let $K={\mathbb F}_3$ and the elliptic curve be of equation
	$$y^2=x^3+x+1.$$ We want to find out an isogeny whose $\alpha-$part is
 $\alpha(x)=x$ and whose "derivative at the origin" is $c=1.$
	
	\noindent Relation (\ref{wond})
\begin{equation}
	c^2Ax\alpha(x)+c^2(x^3+B)\beta(x)=Ax^2
\end{equation}
becomes
\begin{equation}\label{wondp}
	x\alpha(x)+(x^3+1)\beta(x)=x^2
\end{equation}
which provides that
$\beta=0.$
 To determine $\gamma$, we first pick any $c_0$ such that $c_0^3-c_0=0$; the recurrence for $\gamma$ is given by
relation (\ref{equ1}):
\begin{equation}
c^2x^3\alpha^2+c^2B\alpha^2+c^2Ax\beta^2=x^2(\alpha+\beta+\gamma)^3+x^2A\gamma+Bx^2
\end{equation}
which becomes in this case
$$x^5+x^2=x^2(x+\gamma(x))^3+x^2\gamma(x)+x^2.$$
Keeping into account the initial condition for $\gamma$ given by (\ref{c0}), this immediately implies that
$\gamma(x)=c_0.$
To conclude, all  formal isogenies  as required are of the form
$(x, y)\mapsto (x+c_0, y),$
where $ c_0\in \{0,1,2\} $. 
Notice that they are also isogenies in the usual sense.

\hfill
	
	\noindent {\bf Example 2.}  Let $K={\mathbb F}_3$ and the elliptic curve be of equation
	$$y^2=x^3-x$$ (hence $A=-1$ and $B=0$). We want to find out an isogeny whose $\alpha-$part is
 $\alpha(x)=x $ and whose "derivative at the origin" is $c=1.$
	
\noindent	Relation \ref{wond}
\begin{equation}
	c^2Ax\alpha(x)+c^2(x^3+B)\beta(x)=Ax^2
\end{equation}
becomes
\begin{equation}
	-x\alpha(x)+x^3\beta(x)=-x^2
\end{equation}
that is
\begin{equation}
	-\alpha(x)+x^2\beta(x)=-x
\end{equation}
which implies 
$\beta=0.$
 To determine $\gamma$, we first pick any $c_0$ such that $c_0^3-c_0=0$; the recurrence for $\gamma$ is given by
relation (\ref{equ1}):
\begin{equation}
c^2x^4\alpha(x)^2+c^2A\delta^2(x)=x^3\alpha^3(x)+\delta^3(x)+x^3\gamma(x)^3+Ax^3\gamma(x)
\end{equation}
which becomes in this case
$$x^6+1=x^6+1+x^3\gamma^3(x)-x^3\gamma(x).$$
Keeping into account the initial condition for $\gamma$ given by (\ref{c0})
this imediately imply
$\gamma(x)=c_0.$
To conclude, all  formal isogenies  as required are of the form
$$(x, y)\mapsto (x+c_0, y).$$
Notice that they are also isogenies in the ususal sense.

\hfill

\noindent {\bf Example 3.} In the same setup as in the previous example, suppose we want to describe all the isogenies with $\beta$-part
$\beta(x)=-\frac{1}{x}$
and $c=1.$

\noindent Relation \ref{wondp}
\begin{equation}
	-\alpha(x)+x^2\beta(x)=-x
\end{equation}
implies
$\alpha(x)=0. $
To determine $\gamma$, pick any $c_0$ such that $c_0^3-c_0=0$; the recurrence for $\gamma$ is given by
relation (\ref{equ1}):
\begin{equation}
c^2x^4\alpha(x)^2+c^2A\delta^2(x)=x^3\alpha^3(x)+\delta^3(x)+x^3\gamma(x)^3+Ax^3\gamma(x)
\end{equation}
which becomes in this case
\begin{equation}
1=1+x^3\gamma(x)^3-x^3\gamma(x)
\end{equation}
which implies $\gamma(x)=c_0.$ We get that $\eta=-\frac{1}{x}+c_0, $ that is, the formal isogenies is in this case are of the form
$$(x, y)\mapsto (-\frac{1}{x}+c_0, \frac{y}{x^2}).$$
Notice that they are also rational isogenies as in the previous case.

\hfill

\noindent {\bf Example 4.} Let $ K=\F_9 $ and the elliptic curve be of equation $$y^2=x^3+x+2, $$
(hence $ A=1  $ and $ B=2 $). We want to find an isogeny whose $ \beta- $part is
$\beta(x)=\frac{x^2}{x^9+x^3-1}$
and $c=1.$

\noindent Relation (\ref{wond})
\begin{equation}
x\alpha(x)+(x^3+2)\beta(x)=x^2
\end{equation}
provides 
$\alpha(x)=\frac{x^{10}}{x^9+x^3-1}.$
To determine $\gamma$, pick any $c_0$ such that $c_0^3-c_0=0$; the recurrence for $\gamma$ is given by
relation \ref{equ1}:
\begin{equation}
(x^3+x+2)\left(\frac{\alpha-\beta}{x}\right)^2=(\alpha(x)+\beta(x)+\gamma(x))^3+(\alpha(x)+\beta(x)+\gamma(x))+2
\end{equation}
which produces $$\gamma(x)=\frac{x^6 + x^3 + 1}{x^9 + x^3 +2}.$$ For $ c_0=2, $ we get that $\eta=\frac{x^4+x^2+2x+1}{x^3+x+2} $. We can observe that
$$(x, y)\mapsto \left(\frac{x^4+x^2+2x+1}{x^3+x+2}, -y\cdot\frac{x^6+2x^4+x^3+x^2+x}{x^6 +2x^4 + x^3 + x^2 + x+1}\right).$$
Notice that this isogeny is the multiplication-by-2 map.

\hfill

\noindent {\bf Acknowledgements.} This work was partially supported by a grant of the Ministry of Research, Innovation and Digitalization, CNCS/CCCDI - UEFISCDI, project number ERANET-CHISTERA-IV-PATTERN, within PNCDI IV.

\noindent The author would like to thank V. Vuletescu for asking me the problem and for many valuable suggestions.


\printbibliography[heading=bibintoc]


\end{document}